\newtheorem{theorem}{Theorem}[section]
\newtheorem{corollary}{Corollary}[section]
\newtheorem{proposition}{Proposition}[section]
\theoremstyle{definition}
\newtheorem{definition}{Definition}[section]
\newtheorem{example}{Example}[section]
\newtheorem{remark}{Remark}[section]
\numberwithin{equation}{section}
\begin{document}
\setcounter{page}{1}

\vspace*{1.0cm}
\title[Relations between NVVI and NVOP on Hadamard manifold in terms of bifunction]
{Relations between nonsmooth vector variational inequalities and nonsmooth vector optimization problems on Hadamard manifold in terms of bifunction}
\author[N. Singh, A. Iqbal and S. Ali]{ Nagendra Singh$^{1}$, Akhlad Iqbal$^{1,*}$, Shahid Ali$^{1}$}
\maketitle
\vspace*{-0.6cm}

\begin{center}
{\footnotesize {\it

$^1$Department of Mathematics, Aligarh Muslim University, Aligarh, India\\
%$^2$Department of Mathematics, xxxxxxx University, xxxxxxxxxxxxx, Country/Region

}}\end{center}

\vskip 4mm {\small\noindent {\bf Abstract.}
In this paper, we discuss the concepts of bifunction and geodesic convexity for vector valued functions on Hadamard manifold. The Hadamard manifold is a particular type of Riemannian manifold with non-positive sectional curvature. Using bifunction, we introduce a definitions of generalized  geodesic convexity in the context of the Hadamard manifold. To support the definition, we construct a non-trivial example that demonstrates the property of geodesic convexity on Hadamard manifold. Additionally, we define the geodesic $h$-convexity, geodesic $h$-pseudoconvexity and geodesic $h$-quasiconvexity for vector valued function using bifunction and study their several properties. Furthermore, we demonstrate the uniqueness of the solution for nonsmooth vector variational inequality problem (NVVIP) and proved the characterization property for the solution of NVVIP and the Minty type NVVIP (MNVVIP) on Hadamard manifold in terms of bifunction. Afterwards, we consider a nonsmooth vector optimization problem (NVOP) and investigate the relationships among the solutions of NVOP, NVVIP and MNVVIP.

\noindent {\bf Keywords.}
Geodesic convexity, Hadamard Manifold, Vector valued bifunction, Nonsmooth vector optimization problem, Nonsmooth vector variational inequality problem. }

\renewcommand{\thefootnote}{}
\footnotetext{ $^*$Akhlad Iqbal.
\par
E-mail addresses: singh.nagendra096@gmail.com, akhlad6star@gmail.com, shahid\_rrs@yahoo.com.
\par
Received January 23, 2015; Accepted February 16, 2016. }

\section{Introduction}

%Variational inequality  problems have been found with an explosive growth in theoretical advances,
%algorithmic development and applications across all the discipline of pure and applied sciences; see
%\cite{1,2,3,4,5,6}   and the references therein.

	In this paper, we discuss the concepts of bifunction and geodesic convexity for vector valued functions on Hadamard manifold. The Hadamard manifold is a particular type of Riemannian manifold with non-positive sectional curvature. Using bifunction, we introduce a definitions of generalized  geodesic convexity in the context of the Hadamard manifold. To support the definition, we construct a non-trivial example that demonstrates the property of geodesic convexity on Hadamard manifold. Additionally, we define the geodesic $h$-convexity, geodesic $h$-pseudoconvexity and geodesic $h$-quasiconvexity for vector valued function using bifunction and study their several properties. Furthermore, we demonstrate the uniqueness of the solution for nonsmooth vector variational inequality problem (NVVIP) and proved the characterization property for the solution of NVVIP and the Minty type NVVIP (MNVVIP) on Hadamard manifold in terms of bifunction. Afterwards, we consider a nonsmooth vector optimization problem (NVOP) and investigate the relationships among the solutions of NVOP, NVVIP and MNVVIP.

\noindent
Suppose the objective function is not smooth but has some directional derivatives. In that case, the VIP defined in terms of a bifunction can be used as a tool to investigate the NVOP. In 1980, Giannessi \cite{Giannessi} presented vector valued variational inequalities in finite dimensional Euclidean spaces. Lee et al. \cite{Lee} have established the relation between different kinds of VVIP and NVOP. Furthermore, Chen et al. \cite{Chen} extended the concept of VVIP and NVOP to the Hadamard manifolds using subdifferentials. For the latest work on vector optimization problems, see \cite{Chen,Chen1,Yang}.
\noindent
Rapscak \cite{Rapscak} and Udriste \cite{Udriste} took into consideration geodesic convexity, a generalization of convexity from the Euclidean space to Riemannian space. The concepts of convexity and monotonicity for set-valued function on Hadamard manifolds were studied by Barani et al. \cite{Barani}. Later, Zhou et al. \cite{Zhou} investigated the relation between VOP and VVIP on Hadamard manifolds.\\ 

\noindent
Motivated by the work of \cite{Ansari3,Chen,Crespi,Nemeth}, in the present article, we define  NVVI and MNVVI in terms of a vector version bifunction. We also define the geodesic convex (GC), geodesic h-convexity and generalized class of h-convexity of vector valued functions on Hadamard manifolds and discuss some relations between the class of geodesic convexity and geodesic h-convexity of vector valued functions. Afterwords, we define the class of monotonicity of vector valued bifunction on Hadamard manifolds that helps to obtain the relation between NVVIP and MNVVIP. Furthermore, we consider a NVOP and discuss the relationships among the solutions of NVOP, NVVIP and MNVVIP. 
%%%%%%%%%%%%%%%%%%%%%%%%%%%%%%%%%%%%%%%%%%%%%%%%%%%%%%%%%%5

\section{Preliminaries}

Let $\mathbb{R}^n$ represents an Euclidean space of dimension $ n $ with the usual norm and $\mathbb{R}^n_{+}$ be the non-negative orthant of $\mathbb{R}^n$. We use the following standards notations for vectors in $ \mathbb{R}^n$ throughout the paper.\\
Let $ p=(p_{1},p_{2},...,p_{n}) $ and $ q=(q_{1},q_{2},...,q_{n}) $ be two vectors in $ \mathbb{R}^{n} $. Then,
\begin{equation*}
	p\leqq q\Leftrightarrow p_{k}\leq q_{k}\quad \mbox{for}~k=1,2,...,n\quad\Leftrightarrow p-q\in -\mathbb{R}_{+}^{n};
\end{equation*}
\noindent
\begin{equation*}
	p\leq q \Leftrightarrow p_{k}\leq q_{k}\quad \mbox{for}\quad k=1,2,...,n \quad \mbox{and} \quad p\neq q \quad\Leftrightarrow p-q \in -\mathbb{R}_{+}^n\setminus\{0\};
\end{equation*}
\noindent
\begin{equation*}
	p<q \Leftrightarrow p_{k}< q_{k}\quad \mbox{for}\quad k=1,2,...,n \quad\Leftrightarrow p-q \in -int\mathbb{R}_{+}^n; 
\end{equation*}
\noindent
Let $ \mathbb{M} $ represents a Riemannian manifold of dimension $ n $ with Levi-civita  (or Riemannian) connection $ \nabla $ in throughout the paper. Let $\langle \cdot,\cdot\rangle_{p} $ represents the scalar product on $ T_{p}\mathbb{M} $ with the associated norm $\| \cdot\|_{p}$,  subscript $ p $ may be omitted.\\
\noindent
For $ p,q\in \mathbb{M} $, let $ \gamma_{pq} :[0,1]\longrightarrow{\mathbb{M}} $ be a smooth curve from $ p $ to $ q $ and $ \dot\gamma_{pq}(t) $ be the tangent vector to the curve $ \gamma_{pq} $. Then the arc length of curve $ \gamma_{pq}(t) $ is $ d(p,q)=inf_{\gamma_{pq}}L(\gamma_{pq}) $.\\
Where,
\begin{equation*}
	L(\gamma_{pq}):= \int_{0}^{1}\|\dot\gamma_{pq}(t)\|dt,
\end{equation*}\\
\noindent
The smooth curve $\gamma_{pq}$ is called geodesic if
\begin{equation*}
	\gamma_{pq}(0)=p\quad\gamma_{pq}(1)=q\quad\mbox{and}\quad \nabla_{\dot\gamma_{pq}}{\dot\gamma_{pq}}=0\quad\mbox{on}\quad[0,1].
\end{equation*}
Hopf-Rinow theorem states that if any two points on $ \mathbb{M}$ can be joined by a minimal geodesic, then the Riemannian manifold $ \mathbb{M} $ is complete.\\
For complete manifold, the exponential map $ \exp_{p}:T_{p}\mathbb{M}\longrightarrow \mathbb{M} $ at point $ p $ is defined by $ \exp_{p}v=\gamma_{v}(1,p) $, $\forall~v\in \mathbb{M} $.
Where $ \gamma(\cdot)=\gamma_{v}(\cdot,p) $ is the geodesic emanating from $ p $ with the velocity $ v $, $ i.e$ $ \gamma(0)=p $ and $ \dot\gamma(0)=v $. In general $ \exp_{p}(tv)=\gamma(t,v) $ for every real number $ t $.
The parallel transport along $\gamma_{pq}$ on the tangent bundle $ T\mathbb{M} $ with respect to $\nabla$ is denoted by $ P_{\gamma_{pq},.,.} $ and defined as 
\begin{equation*}
	P_{\gamma_{pq},\gamma_{pq}(0),\gamma_{pq}(1)}(v)=V(\gamma_{pq}(1))\quad \mbox{for all}\quad v\in T_{\gamma_{pq}(0)}M,
\end{equation*}
where $ V $ is the unique vector field satisfying $ \nabla_{\dot\gamma_{pq}(t)}V=0$, $\forall$ $ t $ and $ V(\gamma_{pq}(0))=v $.\\
In generally, The parallel transport on $ \mathbb{H} $ from $ T_{p}\mathbb{H} $ to $ T_{w}\mathbb{H} $ is represented by $ P_{w,p} $ .
\begin{definition}{\rm\cite{Udriste}}\label{d2.1}
	Let $ S $ $\subseteq$ $ \mathbb{M} $ and if for every points $ p,q \in S$, $\exists$ a geodesic $ \gamma_{pq}(t) $ joining $ p $ to $ q $ lying in $ S $, then $ S $ is called geodesic convex $(GC)$.
\end{definition}
\noindent
$ \Gamma_{pq}^{S} $ represents the set of all geodesics joining any two distinct points $ p,q \in  S $ and lying in $ S $.\\
\begin{definition}{\rm\cite{Udriste}}\label{d2.2}
	Let $ S(\neq\phi) $ $\subseteq$ $ \mathbb{M} $ be set and $ \Phi:S\longrightarrow \mathbb{R} $ be a function. If for any $ p,q \in S$ and any geodesic $ \gamma_{pq}\in S $, we have 
	\begin{equation*}
		\Phi(\gamma_{pq}(t))\leq (1-t)\Phi(p)+t\Phi(q),\quad\forall~ t\in [0,1],
	\end{equation*}
	then $ \Phi $ is called $ GC $.
\end{definition} 
\noindent
A manifold is called Hadamard manifold if it is complete simply connected and the sectional curvature is nonpositive. In this paper, we denote by $ \mathbb{H} $ the Hadamard manifold. 
\begin{proposition}{\rm\cite{Docarmo}}\label{p2.1}
	For any $ p\in \mathbb{H} $. The  exponential map $ \exp_{p}:T_{p}\mathbb{H}\longrightarrow \mathbb{H} $ is a diffeomorphism, and $\forall$ $ p,q\in \mathbb{H} $, there is a unique minimal geodesic $ \gamma_{pq}$ joining $ p $ to $ q $ such that 
	\begin{equation*}
		\gamma_{pq}(t)=\exp_{p}(t\exp_{p}^{-1}q),\quad\forall~t\in [0,1].
	\end{equation*} 
\end{proposition}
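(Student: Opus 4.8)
The plan is to recognize this as the Cartan--Hadamard theorem and to prove it in four stages: completeness makes $\exp_p$ globally defined and surjective; nonpositive curvature makes $\exp_p$ a local diffeomorphism; completeness again upgrades this to a covering map; and simple connectedness collapses the covering to a global diffeomorphism, whence the uniqueness and minimality of $\gamma_{pq}$ are immediate.

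First I would invoke the Hopf--Rinow theorem recalled above: since $\mathbb{H}$ is complete, $\exp_p$ is defined on all of $T_p\mathbb{H}$, and every $q\in\mathbb{H}$ is joined to $p$ by a minimal geodesic of the form $t\mapsto\exp_p(tv)$ with $\|v\|=d(p,q)$; in particular $\exp_p$ is surjective. The key geometric step is that nonpositive sectional curvature forbids conjugate points along any geodesic issuing from $p$: if $J$ is a nontrivial Jacobi field along such a geodesic $\gamma$ with $J(0)=0$, then $f(t):=\|J(t)\|^2$ satisfies $f(0)=f'(0)=0$ and, using Jacobi's equation together with the sign condition $K\le 0$, obeys $f''(t)\ge 2\|J'(t)\|^2\ge 0$; hence $f$ is convex and strictly positive for $t>0$, so $J$ never vanishes again. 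Since conjugate points are exactly the points where $d(\exp_p)$ degenerates, $\exp_p$ has nonsingular differential everywhere, i.e. it is a local diffeomorphism.

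Next I would pull the Riemannian metric of $\mathbb{H}$ back by $\exp_p$, turning $\exp_p$ into a local isometry from $T_p\mathbb{H}$ (equipped with this pulled-back metric) onto $\mathbb{H}$; the rays through the origin are geodesics of the pulled-back metric and are defined for all time, so the domain is a complete Riemannian manifold, and a local isometry whose domain is complete and whose target is connected is a Riemannian covering map. Because $\mathbb{H}$ is simply connected and $T_p\mathbb{H}$ is connected, this covering has a single sheet, so $\exp_p$ is a diffeomorphism. It then follows that for each $q$ there is a unique $v=\exp_p^{-1}q\in T_p\mathbb{H}$, that $\gamma_{pq}(t):=\exp_p(t\exp_p^{-1}q)$ is the unique geodesic joining $p$ to $q$, and---since a minimal geodesic exists by Hopf--Rinow and must be one of these---that it is the minimal one.

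I expect the main obstacle to be the passage from ``local diffeomorphism'' to ``covering map'' and then to ``global diffeomorphism'': this is precisely where completeness and simple connectedness are genuinely used, and it requires a global argument (either the standard theorem that a local isometry out of a complete manifold is a covering, or an explicit path-lifting construction) rather than any local computation. By contrast, the Jacobi-field estimate, though it is the conceptual core, is short once the sign of the curvature is invoked, and the concluding uniqueness and minimality claims are formal consequences of $\exp_p$ being bijective.
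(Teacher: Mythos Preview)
Your argument is correct and is precisely the classical proof of the Cartan--Hadamard theorem as found in the cited reference. Note, however, that the paper does not give any proof of this proposition: it is stated in the preliminaries with a citation to Do~Carmo and used without argument. So there is no ``paper's own proof'' to compare against; what you have written is essentially the textbook proof that the citation points to, and each of your four stages (global definition and surjectivity via Hopf--Rinow, absence of conjugate points from the Jacobi-field convexity estimate, the covering-map upgrade via the pulled-back metric, and the collapse to a diffeomorphism by simple connectedness) is standard and correctly identified.
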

\noindent
As \cite{Ansari3}, for a fixed $ s\in (0,1) $, let $ w= \gamma_{pq}(s)=\exp_{p}(s\exp_{p}^{-1}q) $ be a point on the geodesic $ \gamma_{pq}:[0,1]\longrightarrow \mathbb{H} $ such that the geodesic is divided by $ w $ into two parts. First part can be written as
\begin{equation*}
	\gamma_{wp}(t)=\gamma_{pq}(-st+s)=\exp_{p}(-st+s)\exp_{p}^{-1}q,\quad \forall~t\in [0,1].
\end{equation*}
that is,
\begin{equation}\label{e1}
	\exp_{w}(t\exp_{w}^{-1}p)=\exp_{p}(-st+s)\exp_{p}^{-1}q,\quad\forall~t\in [0,1],
\end{equation}
and the second part can be written as 
\begin{equation*}
	\gamma_{wq}=\gamma_{pq}((1-s)t+s)=\exp_{p}(((1-s)t+s)\exp_{p}^{-1}q)\quad\forall~t\in [0,1],
\end{equation*}
that is,
\begin{equation}\label{e2}
	\exp_{w}(t\exp_{w}^{-1}q)=\exp_{p}(((1-s)t+s)\exp_{p}^{-1}q)\quad\forall~t\in [0,1].
\end{equation}
From (\ref{e1}), we get
\begin{equation}\label{e3}
	\exp_{w}^{-1}p=-sP_{w,p}\exp_{p}^{-1}q,
\end{equation}
Using (\ref{e2}), we get
\begin{equation}\label{e4}
	\exp_{w}^{-1}q=(1-s)P_{w,p}\exp_{p}^{-1}q.
\end{equation}
Similarly, we have 
\begin{equation}\label{e5}
	\exp_{w}^{-1}p=sP_{w,q}\exp_{q}^{-1}p.
\end{equation}
\begin{definition}{\rm\cite{Ferreira}}\label{d2.4}
	A finite function $ \Phi:\mathbb{H} \longrightarrow \mathbb{R}\cup\{\pm\infty\} $ is defined on $ \mathbb{H} $. Then,
	\begin{enumerate}
		\item If for a point $ p $
		\begin{equation*}
			\Phi^{D}(p;v)=\limsup_{t\rightarrow0^{+}}\frac{\Phi(\exp_{p}tv)-\Phi(p)}{t}.
		\end{equation*}
		Then $ \Phi $  is called the Dini upper directional derivative at $ p \in \mathbb{H} $ in the direction $ v\in T_{p}\mathbb{H} $.
		\item If for a point $ p $
		\begin{equation*}
			\Phi_{D}(p;v)=\liminf_{t\rightarrow 0^{+}}\frac{\Phi(\exp_{p}tv)-\Phi(p)}{t}.
		\end{equation*}
		Then, $ \Phi $ is called the Dini lower directional derivative at $ p\in \mathbb{H} $ in the direction $ v\in T_{p}\mathbb{H} $.
	\end{enumerate}
\end{definition}
%%%%%%%%%%%%%%%%%%%%%%%%%%%%%%%%%%%%%%%%%%%%%%%
\section{Generalized convexities for vector valued function and nonsmooth vector variational inequalities}

In the recent years, the concept of generalized convexities, as pseudo-convexity, quasiconvexity etc, was defined by using different types of generalized derivatives. Komlosi \cite{Komlosi} defined quasiconvexity by using Dini directional derivative. Ansari et al. \cite{Ansari3} introduced the theory of pseudoconvexity and quasiconvexity of mapping by using a bifunction on Riemannian manifold. In this section, we extend the concept of convexity, pseudoconvexity, and quasiconvexity for vector valued function by using vector valued bifunction on Hadamard manifolds.
Firstly, we define geodesic convexity of vector valued functions on $ \mathbb{H} $ as follows:
\begin{definition}\label{d3.1}
	Let $ S(\neq\phi) $ $\subseteq$ $\mathbb{M}$ be $ GC $ set. A function $ \Phi:S\longrightarrow \mathbb{R}^{m} $ is said to be $ GC $ if for each $ p,q\in S $ and any geodesic $ \gamma_{pq}(t) $ joining $ p $ to $ q $, the function $ \Phi o\gamma_{pq}$ is convex on $ [0,1] $.\\
	That is,  $ a,b \in [0,1] $ and $ t\in [0,1] $
	\begin{equation*}
		\Phi o \gamma_{pq}(ta+(1-t)b)\leq t\Phi o\gamma_{pq}(a)+(1-t)\Phi o \gamma_{pq}(b);
	\end{equation*}
	or
	\begin{equation}\label{e6}
		\Phi o\gamma_{pq}(ta+(1-t)b)-t\Phi o\gamma_{pq}(a)-(1-t)\Phi o\gamma_{pq}(b)\in -\mathbb{R}_{+}^{m}\cup{\{0\}}.
	\end{equation}
\end{definition}
\begin{definition}\label{d3.2}
	Let $ S(\neq\phi) $ $\subseteq$ $ \mathbb{H} $ be a $ GC$ set, then a vector valued function $\Phi :S\longrightarrow \mathbb{R}^{m}$ is said to be geodesic quasiconvex if $\forall$ $ p,q \in S $, we have
	\begin{equation*}
		\Phi_{i}(w)\leq \max\{\Phi_{i}(p), \Phi_{i}(q)\},\quad\forall~i=1~to~p.
	\end{equation*}
	where, $\Phi=(\Phi_{1},\Phi_{2},...,\Phi_{m})$ and $ w=\exp_{q}(t\exp_{q}^{-1}p) $ for all $ t\in [0,1] $.
\end{definition}
\noindent
In the following example, we show the existence of vector valued geodesic convex function:
\begin{example}\label{ex3.3}
	Let  $\mathbb{H}=\{(x,y): y=\sqrt{1+x^{2}},~ x\in \mathbb{R} \} $ and $ p=(0,1) $ and $ q=(1,\sqrt{2}) $ be two points on $ \mathbb{H} $. The geodesic joining these points is $ \gamma_{pq}(t)=(t,\sqrt{1+t^2}) $. Then, a vector valued function $ \Phi:\mathbb{H}\longrightarrow\mathbb{R}^2 $ defined as $ \Phi(x,y)=(x^2,y^2) $ is convex on $ \mathbb{H} $. For this, we show the composition function $ \Phi o \gamma_{pq} $ is convex on $ [0,1] $ as follows:\\
	Let $ a, b \in[0,1] $ and $ t\in [0,1] $. Then,
	\begin{equation*}
		\Phi o\gamma_{pq}(ta+(1-t)b)-t\Phi o \gamma_{pq}(a)-(1-t)\Phi o \gamma_{pq}(b)=\Phi (\gamma_{pq}(ta+(1-t)b))-t\Phi(\gamma_{pq}(a))-(1-t)\Phi(\gamma_{pq}(b)),
	\end{equation*}
	\begin{equation*}
		~~~~~~~~~~~~~~~~~~~~~~~~~~~~~~~=\Phi(ta+(1-t)b,\sqrt{1+\{ta+(1-t)b\}^2})-t\Phi(a,\sqrt{1+a^2})-(1-t)\Phi(b,\sqrt{1+b^2}),
	\end{equation*}
	\begin{equation*}
		=-t(1-t)((a-b)^2,(a-b)^2)\leq0,
	\end{equation*}
	or
	\begin{equation*}
		\Phi o\gamma_{pq}(ta+(1-t)b)-t\Phi o\gamma_{pq}(a)-(1-t)\Phi o\gamma_{pq}(b)\in -\mathbb{R}_{+}^{2}\cup\{0\}.
	\end{equation*}
	That is, $ \Phi  $ is geodesic convex vector valued function on $ \mathbb{H} $.
\end{example}
\noindent
Let $ \mathbb{R}^{*}=\mathbb{R}\cup\{+\infty,-\infty\} $ denotes the set of extended real numbers. The concepts of NVVIP and MNVVIP on Hadamard manifolds using bifunction were introduced by Singh at al. \cite{Singh} as follows. They \cite{Singh} also discussed their existence. 
\begin{definition}\label{d3.4}\cite{Singh}
	Suppose $ S\subseteq\mathbb{H} $ and let $ h:S \times T\mathbb{H}\longrightarrow (\mathbb{R}^{*})^{m} $ be  vector valued bifunction defined as
	\begin{equation*}
		h(p, \exp_{p}^{-1}q)=(h_{1}(p, \exp_{p}^{-1}q),h_{2}(p, \exp_{p}^{-1}q),...,h_{m}(p, \exp_{p}^{-1}q));
	\end{equation*}
	where, $ h_{i}:S\times T\mathbb{H}\longrightarrow\mathbb{R}\cup\{+\infty,-\infty\} $ be a bifunction see \cite{Ansari3}. Then,
	\begin{enumerate}
		\item (NVVI): Find $\bar{p}\in S$, such that
		\begin{equation*}
			h(\bar{p},\exp_{\bar{p}}^{-1}q)=(h_{1}(\bar{p},\exp_{\bar{p}}^{-1}q),h_{2}(\bar{p},\exp_{\bar{p}}^{-1}q),...,h_{m}(\bar{p},\exp_{\bar{p}}^{-1}q))\notin -\mathbb{R}_{+}^{m} \setminus \{0\};
		\end{equation*}
		\item (MNVVI): Find $\bar{p}\in S $, such that 
		\begin{equation*}
			h(q,\exp_{q}^{-1}\bar{p})=(h_{1}(q,\exp_{q}^{-1}\bar{p}),h_{2}(q,\exp_{q}^{-1}\bar{p}),...,h_{m}(q,\exp_{q}^{-1}\bar{p}))\notin\mathbb{R}_{+}^{m}\setminus\{0\}.
		\end{equation*}
	\end{enumerate}
\end{definition}
\noindent
convexity and monotonicity were defined by  Ansari \cite{Ansari3} in terms of real valued bifunction on Riemannian manifold. We use vector valued bifunction to generalize those concepts for vector valued function on Hadamard manifold in the manner described below:
\begin{definition}\label{d3.5}
	Let $ S(\neq\phi)\subseteq \mathbb{H} $ be a $ GC $ set and $ h:S\times T\mathbb{H}\longrightarrow (\mathbb{R}^{*})^{m} $ be a vector valued bifunction. Then, a function $ \Phi:S\longrightarrow\mathbb{R}^{m} $ is said to be
	\begin{enumerate}
		\item Geodesic h-convex if $\forall$ $ p,q\in S $ with $ p\neq q $, 
		\begin{equation*}
			h(p;\exp_{p}^{-1}q)\leq\Phi(q)-\Phi(p);
		\end{equation*}
		or
		\begin{equation*}
			h(p;\exp_{p}^{-1}q)+\Phi(p)-\Phi(q)\notin\mathbb{R}_{+}^{m}\setminus \{0\}.
		\end{equation*}
		\item Geodesic $h$-pseudoconvex if $\forall$ $ p,q \in S $ with $ p\neq q $,
		\begin{equation*}
			\Phi(q)\textless\Phi(p)\Rightarrow h(p;\exp_{p}^{-1}q)\in -int\mathbb{R}_{+}^{m}.
		\end{equation*}
		\item Geodesic h-quasiconvex if $\forall$ $ p,q \in S $ with $ p\neq q $, 
		\begin{equation*}
			\Phi(q)\leq \Phi(p)\Rightarrow h(p;\exp_{p}^{-1}q)\notin\mathbb{R}_{+}^{m}\setminus\{0\}.
		\end{equation*}
	\end{enumerate}
\end{definition}
\begin{theorem}\label{th3.6}
	Let $ S(\neq\phi) \subseteq \mathbb{H} $ be a $ GC $ set and $ h:S\times T\mathbb{H}\longrightarrow (\mathbb{R}^{*})^{m} $ be a vector valued bifunction and let $ \Phi: S\longrightarrow \mathbb{R}^{m} $ be a geodesic quasiconvex function with the condition that for each $ i= 1~to~m $,
	\begin{equation}\label{e7}
		h_{i}(p;\exp_{p}^{-1}q)\leq \Phi_{i}^{D}(p,\exp_{p}^{-1}q),
	\end{equation} 
	then $ \Phi $ is geodesic h-quasiconvex.
\end{theorem}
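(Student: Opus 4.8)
The plan is to use the geodesic quasiconvexity of $\Phi$ to show that each scalar component $\Phi_i$ does not increase along the minimal geodesic issuing from $p$ toward $q$, then pass to the Dini upper derivative, and finally invoke the hypothesis \eqref{e7} componentwise.

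First I would fix $p,q\in S$ with $p\neq q$ and assume $\Phi(q)\leq\Phi(p)$, i.e. $\Phi_i(q)\leq\Phi_i(p)$ for every $i$. By Proposition \ref{p2.1} the minimal geodesic from $p$ to $q$ is $\gamma_{pq}(t)=\exp_{p}(t\exp_{p}^{-1}q)$, $t\in[0,1]$, and every point $w=\gamma_{pq}(t)$ can equally be written in the form $\exp_{q}(s\exp_{q}^{-1}p)$ occurring in Definition \ref{d3.2}. Hence geodesic quasiconvexity gives, for all $t\in[0,1]$ and all $i$,
\[
	\Phi_{i}\bigl(\exp_{p}(t\exp_{p}^{-1}q)\bigr)\leq\max\{\Phi_{i}(p),\Phi_{i}(q)\}=\Phi_{i}(p),
\]
the last equality holding because $\Phi_{i}(q)\leq\Phi_{i}(p)$.

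Next I would form the difference quotient: for $t\in(0,1]$ the numerator $\Phi_{i}(\exp_{p}(t\exp_{p}^{-1}q))-\Phi_{i}(p)$ is $\leq 0$, so the whole quotient is $\leq 0$; taking $\limsup_{t\to 0^{+}}$ and using Definition \ref{d2.4}(1) yields $\Phi_{i}^{D}(p;\exp_{p}^{-1}q)\leq 0$ for every $i$. Combining this with the standing hypothesis \eqref{e7} gives $h_{i}(p;\exp_{p}^{-1}q)\leq\Phi_{i}^{D}(p;\exp_{p}^{-1}q)\leq 0$ for all $i$, i.e. $h(p;\exp_{p}^{-1}q)\in-\mathbb{R}_{+}^{m}$. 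Since $-\mathbb{R}_{+}^{m}$ and $\mathbb{R}_{+}^{m}\setminus\{0\}$ are disjoint, this is precisely $h(p;\exp_{p}^{-1}q)\notin\mathbb{R}_{+}^{m}\setminus\{0\}$, which is the defining inequality of geodesic $h$-quasiconvexity in Definition \ref{d3.5}(3).

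I do not expect a serious obstacle. The only points that need care are (i) checking that the geodesic appearing in Definition \ref{d3.2} coincides (up to reparametrization) with the one inside the Dini derivative in Definition \ref{d2.4}, so that the quasiconvexity bound actually controls the relevant difference quotient, and (ii) the elementary fact that the $\limsup$ of a family of nonpositive numbers is nonpositive — which requires the numerator estimate to hold on a whole interval $(0,\delta)$, and this is exactly what the quasiconvexity inequality supplies since it is valid for all $t\in[0,1]$.
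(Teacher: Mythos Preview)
Your proof is correct and follows essentially the same route as the paper's own argument: assume $\Phi(q)\leq\Phi(p)$, use geodesic quasiconvexity to bound $\Phi_i(\exp_p(t\exp_p^{-1}q))\leq\Phi_i(p)$, pass to the Dini upper derivative to get $\Phi_i^D(p;\exp_p^{-1}q)\leq 0$, and then apply \eqref{e7} componentwise. Your write-up is in fact more careful than the paper's, as you explicitly justify the reparametrization between Definitions \ref{d3.2} and \ref{d2.4} and spell out why $h\in-\mathbb{R}_+^m$ forces $h\notin\mathbb{R}_+^m\setminus\{0\}$.
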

\begin{proof}
	Since $ \Phi $ is geodesic quasiconvex, $\forall$ $ p,q \in S $, and $ p\neq q $ we have
	\begin{equation*}
		\Phi(q)\leq \Phi(p)\Rightarrow \Phi (\exp_{p}t\exp_{p}^{-1}q)\leq\Phi(p),\quad t \in [0,1].
	\end{equation*}
	\begin{equation*}
		\Rightarrow  \Phi_{i}^{D}(p,\exp_{p}^{-1}q)\leq0,\quad\forall~i=1~to~m. 
	\end{equation*}
	By the equation (\ref{e7}), we have
	\begin{equation*}
		h_{i}(p,\exp_{p}^{-1}q)\leq 0\quad\forall~i.
	\end{equation*}
	Hence,
	\begin{equation*}
		h(p,\exp_{p}^{-1}q)\notin\mathbb{R}_{+}^{m}\setminus\{0\}
	\end{equation*}
	$\Rightarrow$ $ \Phi $ is h-quasiconvex function.
\end{proof}
\noindent
The following results in the immediate consequence of Theorem \ref{th3.6}.
\begin{corollary}\label{cor3.7}
	Let $ S(\neq\phi) \subseteq\mathbb{H} $ be a $ GC $ set and $ h:S\times T\mathbb{H}\longrightarrow (\mathbb{R}^{*})^{m} $ be a vector valued bifunction and $ \Phi:S\longrightarrow\mathbb{R}^{m} $ be a function satisfying equation (\ref{e7}) and that
	\begin{equation}\label{e8}
		h(p,v)\geqq-h(p,-v)\quad ,\quad h(p,\alpha v)=\alpha h(p,v)\quad\forall~v\in T_{p}\mathbb{H},~p\in S,~\alpha\textgreater0.
	\end{equation}
	If $ \Phi $ is h-pseudoconvex function then it is geodesic quasiconvex and hence geodesic h-quasiconvex.
\end{corollary}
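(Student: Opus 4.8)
The assertion contains two implications: that a geodesic $h$-pseudoconvex $\Phi$ is geodesic quasiconvex, and that it is then geodesic $h$-quasiconvex. The second implication costs nothing: $\Phi$ is assumed to satisfy (\ref{e7}), so once geodesic quasiconvexity is in hand, Theorem \ref{th3.6} applies word for word and delivers geodesic $h$-quasiconvexity. Thus the real content is the first implication, which I would establish by contradiction.

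So suppose $\Phi$ is geodesic $h$-pseudoconvex but not geodesic quasiconvex. Reading the negation of Definition \ref{d3.2} (and, as discussed below, arguing along the connecting geodesic so that all coordinates cooperate), there are $p,q\in S$ with $p\neq q$ and an interior point $w$ of the minimal geodesic joining $p$ and $q$ --- write $w=\gamma_{pq}(s)=\exp_{p}(s\exp_{p}^{-1}q)$ with $s\in(0,1)$ --- at which $\Phi$ strictly exceeds both endpoints in the order of $\mathbb{R}^{m}$, i.e. $\Phi(p)<\Phi(w)$ and $\Phi(q)<\Phi(w)$. Since $w\neq p$ and $w\neq q$, the geodesic $h$-pseudoconvexity of Definition \ref{d3.5}(2), used twice with base point $w$, gives
\begin{equation*}
h(w;\exp_{w}^{-1}p)\in -int\,\mathbb{R}_{+}^{m}\qquad\mbox{and}\qquad h(w;\exp_{w}^{-1}q)\in -int\,\mathbb{R}_{+}^{m}.
\end{equation*}
The geometric ingredient is that the two tangent directions at $w$ are antipodal: combining (\ref{e3}) and (\ref{e4}) yields $\exp_{w}^{-1}q=-\lambda\,\exp_{w}^{-1}p$ with $\lambda=\tfrac{1-s}{s}>0$. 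Now invoke the hypotheses (\ref{e8}): by the positive homogeneity, and then by $h(w,v)\geqq -h(w,-v)$ with $v=-\exp_{w}^{-1}p$,
\begin{equation*}
h(w;\exp_{w}^{-1}q)=\lambda\,h(w;-\exp_{w}^{-1}p)\geqq -\lambda\,h(w;\exp_{w}^{-1}p)\in int\,\mathbb{R}_{+}^{m},
\end{equation*}
the membership holding because $h(w;\exp_{w}^{-1}p)\in -int\,\mathbb{R}_{+}^{m}$ and $\lambda>0$. Since $a\geqq b$ with $b\in int\,\mathbb{R}_{+}^{m}$ forces $a\in int\,\mathbb{R}_{+}^{m}$, we obtain $h(w;\exp_{w}^{-1}q)\in int\,\mathbb{R}_{+}^{m}$, contradicting the displayed conclusion $h(w;\exp_{w}^{-1}q)\in -int\,\mathbb{R}_{+}^{m}$ because $int\,\mathbb{R}_{+}^{m}\cap(-int\,\mathbb{R}_{+}^{m})=\emptyset$. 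Hence $\Phi$ is geodesic quasiconvex, and Theorem \ref{th3.6} then yields geodesic $h$-quasiconvexity.

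The step I expect to be the main obstacle is the extraction, in the second paragraph, of a single interior point $w$ at which all $m$ coordinates of $\Phi$ simultaneously overshoot their endpoint values: the bare negation of Definition \ref{d3.2} only supplies one offending coordinate $i$ together with one point where $\Phi_{i}(\cdot)>\max\{\Phi_{i}(p),\Phi_{i}(q)\}$, whereas the pseudoconvexity hypothesis is triggered only by the genuine vector inequalities $\Phi(w)>\Phi(p)$ and $\Phi(w)>\Phi(q)$. I would close this gap by a continuity/compactness argument on the scalar curves $t\mapsto\Phi_{i}(\gamma_{pq}(t))$ --- passing to the subinterval on which the offending coordinate stays strictly above both endpoints and relocating the base point there so that the whole vector strictly dominates $\Phi(p)$ and $\Phi(q)$ --- relying on the regularity of $\Phi$ implicit throughout the paper. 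Everything downstream of that reduction, namely the cone arithmetic in $\mathbb{R}_{+}^{m}$ and the antipodality relation coming from (\ref{e3})--(\ref{e4}), is routine.
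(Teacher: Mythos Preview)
Your argument follows the paper's proof almost step for step: contradiction, apply $h$-pseudoconvexity at the interior point $w$ in both directions, then use the antipodality of $\exp_{w}^{-1}p$ and $\exp_{w}^{-1}q$ together with (\ref{e8}) to force a sign contradiction. The paper writes the two directions via the parallel-transport formulas (\ref{e3})--(\ref{e4}) and adds the resulting inequalities; you instead eliminate the transport and work with the scalar $\lambda=(1-s)/s$. The cone arithmetic is identical.

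The gap you isolate is genuine, and the paper's own proof contains it unaddressed: from ``there exists an $i$ with $\Phi_{i}(w)>\max\{\Phi_{i}(p),\Phi_{i}(q)\}$'' the paper passes directly to ``$h_{i}(w,\exp_{w}^{-1}p)<0$ and $h_{i}(w,\exp_{w}^{-1}q)<0$ for all $i$'', which is exactly the unjustified step, since Definition \ref{d3.5}(2) requires the full vector inequality $\Phi(p)<\Phi(w)$ to fire. Your proposed repair, however, does not work as stated: restricting to a subinterval on which the single offending coordinate stays strictly above both endpoint values gives no control over the remaining $m-1$ coordinates, and no amount of continuity or compactness manufactures a point where \emph{all} coordinates simultaneously overshoot. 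With the definitions as given, the implication seems to need either a componentwise version of $h$-pseudoconvexity or an extra structural hypothesis on $\Phi$; absent that, both your proof and the paper's share the same lacuna, though yours has the virtue of naming it.
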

\begin{proof}
	On contrary assume that $\exists~p,q,w\in S$ with $ p\neq q $ and that $ w=\exp_{p}t\exp_{x}^{-1}q\quad\forall~t\in [0,1] $. there exists an $ i $ such that
	\begin{equation*}
		\Phi_{i}(w)=\max\{\Phi_{i}(p),\Phi_{i}(q)\}\quad i=1~to~m.
	\end{equation*}
	Since $\Phi$ is geodesic h-pseudoconvex function we have
	\begin{equation*}
		h_{i}(w,\exp_{w}^{-1}p)\textless0\quad\mbox{and}\quad h_{i}(w,\exp_{w}^{-1}q)\textless0\quad\forall~i.
	\end{equation*}
	Using equations (\ref{e3}) and (\ref{e4}) we get
	\begin{equation*}
		h_{i}(w,-P_{w,p}\exp_{p}^{-1}q)+h_{i}(w,P_{w,p}\exp_{p}^{-1}q)\in -int\mathbb{R}_{+}^{m}
	\end{equation*}
	which is contradiction to equation (\ref{e8}). Therefore $\Phi$ is geodesic quasiconvex and by using theorem \ref{th3.6}, $\Phi$ is geodesic h-convex function. 
\end{proof}

\noindent
Monotonicity of vector valued bifunctions on Euclidean space is very useful tool to prove uniqueness of solution of NVVIP and it establish some relations between NVVIP and MNVVIP see \cite{Ansari1} . Now, we extend the concept of monotonicity for the vector valued bifunction on $\mathbb{H}$.
\begin{definition}\label{d4.1}
	A vector valued bifunction $ h:S\times T\mathbb{H}\longrightarrow (\mathbb{R}^{*})^{m} $ is said to be 
	\begin{enumerate}
		\item monotone if $\forall$ $ p,q\in S $ with $ p\neq q $, we have
		\begin{equation*}
			h(p, \exp_{p}^{-1}q)+h(q,\exp_{q}^{-1}p)\notin \mathbb{R}_{+}^{m}\setminus \{0\}.
		\end{equation*}
		\item pseudomonotone if $\forall$ $ p,q\in S $ with $ p\neq q $, we have
		\begin{equation*}
			h(p,\exp_{p}^{-1}q)\notin -\mathbb{R}_{+}^{m}\setminus\{0\}\Rightarrow h(q,\exp_{q}^{-1}p)\notin \mathbb{R}_{+}^{m}\setminus\{0\}.
		\end{equation*}
		\item strictly pseudomonotone if $\forall$ $ p,q\in S $ with $ p\neq q $, we have
		\begin{equation*}
			h(p,\exp_{p}^{-1}q)\notin -\mathbb{R}_{+}^{m}\setminus \{0\}\Rightarrow h(q,\exp_{q}^{-1}q)\in -int\mathbb{R}_{+}^{M}.
		\end{equation*}
	\end{enumerate}
\end{definition}
\begin{example}
	Let $\mathbb{R}_{++}=\{p\in\mathbb{R}: p>0\}$ and $\mathbb{H}=(\mathbb{R}^n_{++},g)$ be a Hadamard manifold with null sectional curvature, where $g(u,v)=\frac{1}{p^2}uv,~\forall~p\in \mathbb{H}\quad\mbox{and}\quad u,v\in T_{p}\mathbb{H}$ is a Riemannian metric.\\
	In particular, $n=1$, $\mathbb{H}=(\mathbb{R}_{++}, g)$, For $p\in \mathbb{H}$, tangent plane $T_{p}\mathbb{H}$ at $p$ equal to $\mathbb{R}$. The unique geodesic $\gamma$ starting from $p=\gamma(0)\in \mathbb{H}$ with the velocity $\gamma^\prime(0)=v\in T_{p}\mathbb{H}$ is defined by
	\begin{eqnarray*}
		\gamma(t)=pe^{(\frac{v}{p})t}
	\end{eqnarray*}
	then 
	\begin{eqnarray*}
		\exp_{p}tv=pe^{(\frac{v}{p})t}\quad\mbox{and}\quad\exp_{p}^{-1}q=p\ln\frac{q}{p}.
	\end{eqnarray*}
	Now, Let $S=\mathbb{R}_{++}$ and we define $h:S\times T\mathbb{H}\longrightarrow (\mathbb{R}^{*})^2$ as
\end{example}
\begin{eqnarray*}
	h(p;d)=\left(-p,\frac{(\ln p-1)}{p}\cdot d\right)
\end{eqnarray*}
Now, 
\begin{eqnarray*}
	h(p;\exp_{p}^{-1}q)+h(q;\exp_{q}^{-1}p)=\left(-p,\frac{(\ln p-1)}{p}\cdot p\ln\frac{q}{p}\right)+\left(-q, \frac{(\ln q-1)}{q}\cdot q\ln \frac{p}{q}\right)
\end{eqnarray*}
\begin{eqnarray*}
	~~~~~~~~~~~~~~~~~~~~~~~~~~= \left(-p-q, \ln p\ln q-\ln q-\ln p\ln p+\ln p +\in q\ln p-\ln p-\ln q\ln q+\ln q\right)
\end{eqnarray*}
\begin{eqnarray*}
	~~~~~~~~~~~~~~~~~~~~~~~~~~=\left(-p-q,-(\ln p-\ln q)^{2}\right).
\end{eqnarray*}
\begin{eqnarray*}
	h(p;\exp_{p}^{-1}q)+h(q;\exp_{q}^{-1}p)\notin\mathbb{R}^{2}_{+}\setminus\{0\}
\end{eqnarray*}
Hence, $h$ is monotone.\\
\noindent
By considering strict pseudomonotonicity of the bifunction $ h $, we investigate the uniqueness of the of NVVIP in the ensuing theorem:
\begin{theorem}
	Let $ h:S\times T\mathbb{H}\longrightarrow(\mathbb{R}^{*})^{m} $ be strictly pseudomonotone bifunction then NVVIP has a unique solution if it exists. 
\end{theorem}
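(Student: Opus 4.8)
The plan is to argue by contradiction, exploiting the implication packaged into strict pseudomonotonicity together with the defining inequality of the NVVIP (Definition \ref{d3.4}(1)). Suppose, toward a contradiction, that $\bar{p}$ and $\bar{q}$ are two \emph{distinct} solutions of the NVVIP in $S$. Since $\bar{p}$ solves the NVVIP, the defining condition holds for every $q\in S$; in particular, taking $q=\bar{q}$ yields
\begin{equation*}
	h(\bar{p},\exp_{\bar{p}}^{-1}\bar{q})\notin -\mathbb{R}_{+}^{m}\setminus\{0\}.
\end{equation*}

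Next I would feed this into the hypothesis of strict pseudomonotonicity applied to the distinct pair $p=\bar{p}$, $q=\bar{q}$ (Definition \ref{d4.1}(3), reading its conclusion as $h(q,\exp_{q}^{-1}p)\in -int\mathbb{R}_{+}^{m}$). This gives
\begin{equation*}
	h(\bar{q},\exp_{\bar{q}}^{-1}\bar{p})\in -int\mathbb{R}_{+}^{m}.
\end{equation*}
Since $-int\mathbb{R}_{+}^{m}\subseteq -\mathbb{R}_{+}^{m}\setminus\{0\}$, this says in particular that $h(\bar{q},\exp_{\bar{q}}^{-1}\bar{p})\in -\mathbb{R}_{+}^{m}\setminus\{0\}$. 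On the other hand, $\bar{q}$ is itself a solution of the NVVIP, so the defining condition applied with the point $\bar{p}\in S$ forces $h(\bar{q},\exp_{\bar{q}}^{-1}\bar{p})\notin -\mathbb{R}_{+}^{m}\setminus\{0\}$, contradicting the previous line. Hence two distinct solutions cannot coexist, and the NVVIP has a unique solution whenever a solution exists.

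I do not anticipate a genuine obstacle here: once the quantifier in the NVVIP ("for all $q\in S$") and the implication in Definition \ref{d4.1}(3) are read correctly, the proof is a short deduction chain with no manifold geometry entering beyond the well-definedness of $\exp_{\bar{p}}^{-1}\bar{q}$ and $\exp_{\bar{q}}^{-1}\bar{p}$, which is guaranteed by Proposition \ref{p2.1}. The only points demanding care are (i) the apparent typographical slip in the statement of strict pseudomonotonicity, whose conclusion must be taken to be $h(q,\exp_{q}^{-1}p)\in -int\mathbb{R}_{+}^{m}$, and (ii) the elementary inclusion $-int\mathbb{R}_{+}^{m}\subseteq -\mathbb{R}_{+}^{m}\setminus\{0\}$, which is exactly what converts the strict conclusion of pseudomonotonicity into a violation of the NVVIP membership condition for $\bar{q}$.
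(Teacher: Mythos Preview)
Your proof is correct and follows essentially the same approach as the paper: assume two distinct solutions, specialize the NVVIP condition at one solution with the other as the test point, apply strict pseudomonotonicity to land in $-\mathrm{int}\,\mathbb{R}_{+}^{m}$, and observe that this contradicts the NVVIP condition for the second solution. Your write-up is in fact slightly more careful than the paper's, since you make explicit both the inclusion $-\mathrm{int}\,\mathbb{R}_{+}^{m}\subseteq -\mathbb{R}_{+}^{m}\setminus\{0\}$ and the typographical correction needed in Definition~\ref{d4.1}(3).
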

\begin{proof}
	On contrary assume that $ \bar{p} $, $ \hat{p} $ $ \in S $ be two distinct solutions of NVVIP. Then
	\begin{equation*}
		h(\bar{p},\exp_{\bar{p}}^{-1}q)\notin -\mathbb{R}_{+}^{m}\setminus\{0\}\quad\mbox{and}\quad h(\hat{p},\exp_{\hat{p}}^{-1}q)\notin-\mathbb{R}_{+}^{m}\setminus\{o\}\quad\forall~q\in \mathbb{H}.
	\end{equation*}
	Now, put $ q=\hat{p} $ and $ \bar{p} $ respectively, we get
	\begin{equation}\label{e17}
		h(\bar{p},\exp_{\bar{p}}^{-1}\hat{p})\notin -\mathbb{R}_{+}^{m}\setminus\{0\}.
	\end{equation}
	And also,
	\begin{equation}\label{e18}
		h(\hat{p},\exp_{\hat{p}}^{-1}\bar{p})\notin-\mathbb{R}_{+}^{m}\setminus \{0\}.
	\end{equation}
	Using strict pseudomonotonicity of $ h $, we have
	\begin{equation*}
		h(\hat{p},\exp_{\hat{p}}^{-1}\bar{p})\in -int\mathbb{R}_{+}^{m}.
	\end{equation*}
	Which is a contradiction to equation (\ref{e18}). And hence $\bar{p}=\hat{p} $.
\end{proof}

\noindent
In the following definition, we extend the concept of upper sign continuity  \cite{Ansari1} on $ \mathbb{R}^{n} $ to Hadamard manifold:
\begin{definition}\label{d4.3}{\rm\cite{Singh}}
	\noindent 
	Let $ S\subseteq\mathbb{H} $ and $ h:S\times T\mathbb{H}\longrightarrow (\mathbb{R}^{*})^{m} $ be a vector valued bifunction is said to be geodesic upper sign continuous If for any pair of distinct points $ p,q\in S $ and $ t\in [0,1] $
	we have
	\begin{equation*}
		h(w_{t}, P_{w_{t},q}\exp_{q}^{-1}p)\notin \mathbb{R}_{+}^{m}\setminus\{0\}\Rightarrow h(p,\exp_{p}^{-1}q)\notin -\mathbb{R}_{+}^{m}\setminus\{0\}.
	\end{equation*}\\
	Where $ w_{t}=\exp_{p}t\exp_{p}^{-1}q $.	
\end{definition}
%%%%%%%%%%%%%
\noindent
In the following theorem, we discuss an important characterization for the solution of NVVIP and MNVVIP. 
\begin{theorem}\label{th4.4}
	Let $ S \subseteq \mathbb{H} $ be a nonempty $ GC $ set and $ h:S\times T\mathbb{H}\longrightarrow (\mathbb{R}^{*})^{m} $ be pseudomonotone and geodesic upper sign continuous vector valued bifunction which is positively homogeneous in the second argument . Then, $ \bar{p}\in S $ is the solution of NVVIP iff it is a solution of MNVVIP.
\end{theorem}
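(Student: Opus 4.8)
The plan is to establish the two implications separately. The forward implication will use only pseudomonotonicity of $h$, while the reverse implication will combine geodesic upper sign continuity, positive homogeneity in the second argument, and the geodesic convexity of $S$ through the geodesic–splitting relations (\ref{e3})--(\ref{e5}).

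\emph{NVVIP $\Rightarrow$ MNVVIP.} Suppose $\bar p\in S$ solves NVVIP, i.e. $h(\bar p,\exp_{\bar p}^{-1}q)\notin -\mathbb{R}_+^m\setminus\{0\}$ for every $q\in S$. Fix $q\in S$ with $q\neq\bar p$ and apply the definition of pseudomonotonicity of $h$ to the pair $(\bar p,q)$: the relation $h(\bar p,\exp_{\bar p}^{-1}q)\notin -\mathbb{R}_+^m\setminus\{0\}$ yields $h(q,\exp_q^{-1}\bar p)\notin\mathbb{R}_+^m\setminus\{0\}$, which is exactly the MNVVIP requirement at $q$. Since $q$ is arbitrary, $\bar p$ solves MNVVIP; notice that neither upper sign continuity nor homogeneity is needed for this direction.

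\emph{MNVVIP $\Rightarrow$ NVVIP.} Suppose $\bar p\in S$ solves MNVVIP and fix $q\in S$ with $q\neq\bar p$. Since $S$ is $GC$, the point $w_t:=\exp_{\bar p}(t\exp_{\bar p}^{-1}q)$ belongs to $S$ for every $t\in(0,1)$, so applying MNVVIP with $w_t$ in the role of the test point gives $h(w_t,\exp_{w_t}^{-1}\bar p)\notin\mathbb{R}_+^m\setminus\{0\}$. By the splitting relation (\ref{e5}) with $s=t$ and $p=\bar p$ we have $\exp_{w_t}^{-1}\bar p = t\,P_{w_t,q}\exp_q^{-1}\bar p$, and positive homogeneity in the second argument gives $h(w_t,\exp_{w_t}^{-1}\bar p)=t\,h(w_t,P_{w_t,q}\exp_q^{-1}\bar p)$. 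Because $t>0$ and $\mathbb{R}_+^m\setminus\{0\}$ is stable under multiplication by positive scalars, it follows that $h(w_t,P_{w_t,q}\exp_q^{-1}\bar p)\notin\mathbb{R}_+^m\setminus\{0\}$ for all $t\in(0,1)$. This is precisely the hypothesis in the definition of geodesic upper sign continuity (Definition \ref{d4.3}) for the pair $(\bar p,q)$, so we obtain $h(\bar p,\exp_{\bar p}^{-1}q)\notin -\mathbb{R}_+^m\setminus\{0\}$. Since $q\in S$ was arbitrary, $\bar p$ solves NVVIP, completing the equivalence.

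The degenerate values $t=0$ (where $w_0=\bar p$, so there is nothing to check) and $t=1$ (where $w_1=q$ recovers the original MNVVIP inequality) are harmless. The step requiring the most care is lining up the parallel–transport factor: relation (\ref{e5}) must be used so that the second argument of $h$ is exactly $P_{w_t,q}\exp_q^{-1}\bar p$, matching the form demanded by Definition \ref{d4.3}, after which positive homogeneity strips off the positive factor $t$. Beyond this bookkeeping the argument is a direct chain of implications; the conceptual content is simply that MNVVIP holding along the whole geodesic segment from $\bar p$ to $q$ furnishes precisely the input that upper sign continuity converts into the NVVIP inequality at $\bar p$.
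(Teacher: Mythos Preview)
Your proof is correct and follows essentially the same route as the paper's: pseudomonotonicity for the forward direction, and for the converse apply MNVVIP at the geodesic points $w_t=\exp_{\bar p}(t\exp_{\bar p}^{-1}q)\in S$, rewrite $\exp_{w_t}^{-1}\bar p$ via relation~(\ref{e5}), and invoke geodesic upper sign continuity. If anything, you are more careful than the paper, which passes silently from $h(q_t,\,tP_{q_t,q}\exp_q^{-1}\bar p)\notin\mathbb{R}_+^m\setminus\{0\}$ to the hypothesis of Definition~\ref{d4.3}; you make explicit that positive homogeneity is what removes the factor $t$.
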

\begin{proof}
	By the pseudomonotonicity of $ h $,
	\begin{equation*}
		h(\bar{p},\exp_{\bar{p}}^{-1}q)\notin -\mathbb{R}_{+}^{m}\setminus\{0\}\Rightarrow h(q,\exp_{q}^{-1}\bar{p})\notin \mathbb{R}_{+}^{m}\setminus \{0\}.
	\end{equation*}
	That is,
	\begin{equation*}
		h(q,\exp_{q}^{-1}\bar{p})=(h_{1}(q,\exp_{q}^{-1}\bar{p}),...,h_{m}(q,\exp_{q}^{-1}\bar{p}))\notin \mathbb{R}_{+}^{m}\setminus \{0\}.
	\end{equation*}
	This implies that $ \bar{p} $ is a solution of MNVVIP.\\
	Conversely, suppose $ \bar{p}\in S $ is a solution of MNVVIP. Then,
	\begin{equation*}
		h(q,\exp_{q}^{-1}\bar{p})\notin \mathbb{R}_{+}^{m}\setminus\{0\}\quad\forall~q\in S.
	\end{equation*}
	Since $ S $, is GC, therefore $\forall$ $ t\in (0,1)$, we have
	$ q_{t}=\exp_{\bar{p}}t\exp_{\bar{p}}^{-1}q\in S $
	\begin{equation*}
		h(q_{t},\exp_{q_{t}}^{-1}\bar{p})\notin \mathbb{R}_{+}^{m}\setminus\{0\}.
	\end{equation*}
	By using equation (\ref{e5}), we obtain
	\begin{equation*}
		h(q_{t},tP_{q_{t},q}\exp_{q}^{-1}\bar{p})\notin \mathbb{R}_{+}^{m}\setminus \{0\}.
	\end{equation*}
	Further, by definition (\ref{d4.3})
	\begin{equation*}
		h(\bar{p},\exp_{\bar{p}}^{-1}q)\notin -\mathbb{R}_{+}^{m}\setminus\{0\}.
	\end{equation*}
	That is, $ \bar{p} $ is a solution of NVVIP.
\end{proof}
%%%%%%%%%%%%%%%%%%%%%%%%%%%%%
\begin{theorem}\label{p4.5}
	Let $ h:S\times T\mathbb{H}\longrightarrow (\mathbb{R}^{*})^{m} $ be a vector valued bifunction which is subadditive  and positively homogeneous in second argument. For $ p\in S\subseteq\mathbb{H}$ , if we set $ W=\{q\in S:h(p,\exp_{p}^{-1}q)\in -int\mathbb{R}_{+}^{m}\} $. Then $\forall$ $ p\in S$, the set $ \exp_{p}^{-1}W $ is geodesic convex.
\end{theorem}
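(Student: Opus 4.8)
The plan is to turn the statement into a short computation with convex cones, after using the Hadamard structure to linearize the problem. By Proposition \ref{p2.1} the exponential map $\exp_{p}:T_{p}\mathbb{H}\to\mathbb{H}$ is a diffeomorphism, so $\exp_{p}^{-1}(\exp_{p}v)=v$ for every $v\in T_{p}\mathbb{H}$ and therefore
\[
\exp_{p}^{-1}W=\{v\in T_{p}\mathbb{H}:\ \exp_{p}v\in S\ \text{and}\ h(p,v)\in -int\mathbb{R}_{+}^{m}\}.
\]
Since the tangent space $(T_{p}\mathbb{H},\langle\cdot,\cdot\rangle_{p})$ is flat, its geodesics are straight line segments, so for a subset of $T_{p}\mathbb{H}$ being geodesic convex is the same as being convex in the ordinary linear sense. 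Hence it suffices to fix $u,v\in\exp_{p}^{-1}W$ and $t\in(0,1)$ (the cases $t\in\{0,1\}$ are immediate, since then the combination equals $v$ or $u$) and to show $tu+(1-t)v\in\exp_{p}^{-1}W$.

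The core step uses only the two structural hypotheses on $h$. By positive homogeneity in the second argument, $h(p,tu)=t\,h(p,u)$ and $h(p,(1-t)v)=(1-t)\,h(p,v)$; by subadditivity,
\[
h\bigl(p,tu+(1-t)v\bigr)\leqq h(p,tu)+h(p,(1-t)v)=t\,h(p,u)+(1-t)\,h(p,v).
\]
Now $-int\mathbb{R}_{+}^{m}$ is a convex cone, so from $h(p,u),h(p,v)\in -int\mathbb{R}_{+}^{m}$ and $t\in(0,1)$ we get $t\,h(p,u)+(1-t)\,h(p,v)\in -int\mathbb{R}_{+}^{m}$; and since $-int\mathbb{R}_{+}^{m}+(-\mathbb{R}_{+}^{m})\subseteq -int\mathbb{R}_{+}^{m}$ (equivalently, any vector $\leqq$ an element of $-int\mathbb{R}_{+}^{m}$ again lies in $-int\mathbb{R}_{+}^{m}$), the displayed inequality forces $h(p,tu+(1-t)v)\in -int\mathbb{R}_{+}^{m}$. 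Combined with $\exp_{p}(tu+(1-t)v)\in S$, this gives $tu+(1-t)v\in\exp_{p}^{-1}W$, and the convexity is proved.

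The cone algebra above is routine; what I expect to need the most care is the two side conditions grafted onto it. First, because $h$ takes values in $(\mathbb{R}^{*})^{m}$, one must make sure the intermediate quantities are genuine finite vectors, so that the relations $\leqq$ and $\in -int\mathbb{R}_{+}^{m}$ are meaningful — this is fine here because $h(p,u)$ and $h(p,v)$ are finite and bound $h(p,tu+(1-t)v)$ from above componentwise, but it should be stated. Second, and more delicately, for $\exp_{p}(tu+(1-t)v)$ to be eligible for membership in $W$ it must lie in $S$; since the curve $t\mapsto\exp_{p}(tu+(1-t)v)$ is generally not a geodesic, this does not follow merely from $S$ being geodesic convex, and one must invoke the relevant hypothesis on $S$ (for instance that $\exp_{p}^{-1}S$ is convex in $T_{p}\mathbb{H}$, which holds in particular when $S=\mathbb{H}$). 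Apart from this bookkeeping, positive homogeneity and subadditivity carry the whole argument.
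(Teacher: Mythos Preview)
Your proof is correct and follows essentially the same route as the paper's: write $\exp_{p}^{-1}W$ in the tangent space, take two of its elements, and use positive homogeneity together with subadditivity of $h(p,\cdot)$ and the cone properties of $-\mathrm{int}\,\mathbb{R}_{+}^{m}$ to conclude that the convex combination stays in $\exp_{p}^{-1}W$. The paper phrases the same computation with $q_{1},q_{2}\in W$ and shows $\exp_{p}[(1-t)\exp_{p}^{-1}q_{1}+t\exp_{p}^{-1}q_{2}]\in W$, which is precisely your argument in different coordinates. Your discussion of the side conditions (finiteness of the intermediate values, and the $S$-membership of $\exp_{p}(tu+(1-t)v)$) is more careful than the paper, which tacitly passes over both points.
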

\begin{proof}
	To prove $ \exp_{p}^{-1}W $ is $GC$ for every $ p\in S $, we have to show that, for $ 0\leq t \leq 1 $ and $ q_{1}, q_{2}\in W $ 
	\begin{equation*}
		\exp_{p}[(1-t)\exp_{p}^{-1}q_{1}+t\exp_{p}^{-1}q_{2}]\in W,
	\end{equation*}
	since, $ q_{1}, q_{2}\in W $, we have
	\begin{equation}\label{e19}
		h(p,\exp_{p}^{-1}q_{1})\in -int\mathbb{R}_{+}^{m},
	\end{equation}
	and
	\begin{equation}\label{e20}
		h(p,\exp_{p}^{-1}q_{2})\in -int\mathbb{R}_{+}^{m}.	
	\end{equation}
	On adding the resultant of (\ref{e19}) and (\ref{e20}) after multiplying by $ (1-t) $ and $ t $ respectively. And using the given assumption, we get
	\begin{equation*}
		h(p,\exp_{p}^{-1}\exp_{p}[(1-t)\exp_{p}^{-1}q_{1}+t\exp_{p}^{-1}q_{2}])\in -int \mathbb{R}_{+}^{m}.
	\end{equation*}
	This implies
	\begin{equation*}
		\exp_{p}[(1-t)\exp_{p}^{-1}q_{1}+t\exp_{p}^{-1}q_{2}]\in W.
	\end{equation*}
	Hence,
	$ \exp_{p}^{-1}W $ is geodesic convex set.
\end{proof}
%%%%%%%%%%%%%%%%%%%%%%%%%%%%%%%%%%
\noindent
\section{Nonsmooth vector optimization problem}
Let $ S(\neq\phi) \subseteq \mathbb{H} $ be a $ GC $ set and $\Phi:S\longrightarrow \mathbb{R}^{m}$ be a vector valued objective function. Here, we consider a following nonsmooth vector optimization problem
\begin{equation}\label{e9}
	\mbox{(NVOP)}\quad\quad Min~\Phi(p)=(\Phi_{1}(p),\Phi_{2}(p),...,\Phi_{m}(p))
\end{equation}
\begin{center}
	s.t. $ p\in S. $\\
\end{center}
Where, $ \Phi_{i}:S\longrightarrow\mathbb{R} $ be a real valued function, for $i= 1~to~m $.
\noindent
\begin{definition}
	A point $ \bar{p}\in \mathbb{H} $ is said to be 
	\begin{enumerate}
		\item an efficient solution of NVOP if
		\begin{equation}\label{e10}
			\Phi(q)-\Phi(\bar{p})=(\Phi_{1}(q)-\Phi_{1}(\bar{p}),\Phi_{2}(q)-\Phi_{2}(\bar{p}),...,\Phi_{m}(q)-\Phi_{m}(\bar{p}))\notin-\mathbb{R}_{+}^{m}\setminus\{0\}\quad\forall~q\in S.
		\end{equation}
		\item weakly efficient solution of NVOP if 
		\begin{equation}\label{e11}
			\Phi(q)-\Phi(\bar{p})=(\Phi_{1}(q)-\Phi_{1}(\bar{p}),..., \Phi_{m}(q)-\Phi_{m}(\bar{p}))\in int\mathbb{R}_{+}^{m}\quad\forall~q\in S.
		\end{equation}
	\end{enumerate} 
\end{definition}
\begin{theorem}
	Let $ S(\neq\phi) \subseteq \mathbb{H} $ be a $ GC $ set and $ \Phi:S\longrightarrow\mathbb{R}^{m} $ be a vector valued function. Let a bifunction $ h:S\times T\mathbb{H}\longrightarrow(\mathbb{R}^{*})^{m} $ satisfies the condition 
	\begin{equation}\label{e12}
		\Phi_{i,D}(p,\exp_{p}^{-1}q)\leq h_{i}(p,\exp_{p}^{-1}q),\quad\forall~i.
	\end{equation}
	If $ \bar{p}\in S $ is an efficient solution of NVOP then it is a solution of NVVIP.
\end{theorem}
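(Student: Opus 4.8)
The plan is to argue by contradiction, push the hypothesis (\ref{e12}) down to the Dini derivatives, and then walk along a geodesic issuing from $\bar{p}$ to violate efficiency. Suppose $\bar{p}\in S$ is an efficient solution of NVOP but is \emph{not} a solution of NVVIP. Then there is some $q\in S$ with
\[
 h(\bar{p},\exp_{\bar{p}}^{-1}q)\in -\mathbb{R}_{+}^{m}\setminus\{0\},
\]
that is, $h_{i}(\bar{p},\exp_{\bar{p}}^{-1}q)\le 0$ for every $i=1,\dots,m$ and $h_{i_{0}}(\bar{p},\exp_{\bar{p}}^{-1}q)<0$ for at least one index $i_{0}$. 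Feeding this into (\ref{e12}) gives at once
\[
 \Phi_{i,D}(\bar{p},\exp_{\bar{p}}^{-1}q)\le 0\quad(i=1,\dots,m),\qquad \Phi_{i_{0},D}(\bar{p},\exp_{\bar{p}}^{-1}q)<0 .
\]

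Next I would pass to the minimal geodesic from $\bar{p}$ to $q$. Writing $v=\exp_{\bar{p}}^{-1}q$ and using Proposition \ref{p2.1}, let $\gamma(t)=\exp_{\bar{p}}(tv)$, $t\in[0,1]$, be the unique minimal geodesic joining $\bar{p}$ and $q$; since $S$ is geodesic convex, $\gamma(t)\in S$ for all $t\in[0,1]$. By Definition \ref{d2.4},
\[
 \Phi_{i,D}(\bar{p},v)=\liminf_{t\to 0^{+}}\frac{\Phi_{i}(\gamma(t))-\Phi_{i}(\bar{p})}{t},
\]
so the goal is to exhibit a parameter $\bar{t}\in(0,1]$ for which $\Phi_{i}(\gamma(\bar{t}))\le\Phi_{i}(\bar{p})$ for every $i$ with strict inequality for at least $i_{0}$: such a $\bar{t}$ gives $\Phi(\gamma(\bar{t}))-\Phi(\bar{p})\in-\mathbb{R}_{+}^{m}\setminus\{0\}$ with $\gamma(\bar{t})\in S$, contradicting efficiency of $\bar{p}$ and finishing the proof. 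For $i_{0}$ the strict inequality $\Phi_{i_{0},D}(\bar{p},v)<0$ already yields a sequence $t_{k}\downarrow0$ with $\Phi_{i_{0}}(\gamma(t_{k}))<\Phi_{i_{0}}(\bar{p})$.

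The step I expect to be the main obstacle is precisely this simultaneous-decrease conclusion. The inequalities $\Phi_{i,D}(\bar{p},v)\le0$ for the remaining indices only control the $\liminf$ of the difference quotients, and do not by themselves force those quotients to be nonpositive along the particular sequence $t_{k}$ extracted from $i_{0}$; so some extra care, or an additional hypothesis, is needed to pin down one $\bar{t}$ that works for all components at once. A clean route is to invoke a little regularity: if each $\Phi_{i}\circ\gamma$ is right-hand directionally differentiable at $0$, or if $\Phi$ is geodesic convex (so the maps $t\mapsto(\Phi_{i}(\gamma(t))-\Phi_{i}(\bar{p}))/t$ are monotone and the $\liminf$ is a genuine one-sided limit attained uniformly in $i$ for small $t$), then a single small $\bar{t}$ serves every index and the contradiction with efficiency goes through; alternatively, stating the conclusion for \emph{weakly} efficient solutions together with the $-int\,\mathbb{R}_{+}^{m}$ version of NVVIP makes all $\Phi_{i,D}(\bar{p},v)$ strictly negative, whence the uniform choice of $\bar{t}$ is immediate. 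I would carry out the contradiction in that setting.
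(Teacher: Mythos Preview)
Your contradiction argument is the contrapositive of the paper's direct one. The paper starts from efficiency of $\bar{p}$, applies it at the geodesic points $\gamma(t)=\exp_{\bar{p}}(t\exp_{\bar{p}}^{-1}q)\in S$ to obtain $\Phi(\gamma(t))-\Phi(\bar{p})\notin-\mathbb{R}_{+}^{m}\setminus\{0\}$ for every $t$, then asserts that this yields $\Phi_{i}(\gamma(t))\geq\Phi_{i}(\bar{p})$ for \emph{every} $i$, divides by $t$, takes $\liminf_{t\to 0^{+}}$, and invokes (\ref{e12}) to conclude $h_{i}(\bar{p},\exp_{\bar{p}}^{-1}q)\geq 0$ for all $i$, hence $h(\bar{p},\exp_{\bar{p}}^{-1}q)\notin-\mathbb{R}_{+}^{m}\setminus\{0\}$.

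The obstacle you flag --- that a nonpositive Dini $\liminf$ in each coordinate need not produce a single parameter $\bar{t}$ at which all coordinates decrease simultaneously --- is exactly the step the paper's direct proof glosses over in the opposite direction: from $v\notin-\mathbb{R}_{+}^{m}\setminus\{0\}$ one cannot infer $v_{i}\geq 0$ for every $i$; the correct reading is only that $v=0$ or some $v_{i}>0$. Thus the paper's implication ``$\Phi(\gamma(t))-\Phi(\bar{p})\notin-\mathbb{R}_{+}^{m}\setminus\{0\}\Rightarrow\Phi_{i}(\gamma(t))\geq\Phi_{i}(\bar{p})$ for all $i$'' is not justified as written. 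Your diagnosis is correct, and the repairs you propose (extra regularity or geodesic convexity of $\Phi$ so that the one-sided limit exists, or switching to the weak-efficiency / $-int\,\mathbb{R}_{+}^{m}$ formulation in which every component inequality is strict and a common small $\bar{t}$ is immediate) are the standard ways to close this gap; the paper simply does not address it.
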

\begin{proof}
	Suppose $ \bar{p} $ is the solution of NVOP. That is,
	\begin{equation*}
		\Phi(q)-\Phi(\bar{p})=(\Phi_{1}(q)-\Phi_{1}(\bar{p}),...,\Phi_{m}(q)-\Phi_{m}(\bar{p}))\notin-\mathbb{R}_{+}^{m}\setminus\{0\},\quad\forall~q\in S.
	\end{equation*}
	Since, $ S $ is geodesic convex, so $ \exp_{\bar{p}}t\exp_{\bar{p}}^{-1}q\in S $,\quad$\forall$ $ t\in [0,1] $.\\
	Therefore, 
	\begin{equation*}
		\Phi(\exp_{\bar{p}}t\exp_{\bar{p}}^{-1}q)-\Phi(\bar{p})\notin-\mathbb{R}_{+}^{m}\setminus\{0\}.
	\end{equation*}
	\begin{equation*}
		\Rightarrow ~~\Phi_{i}(\exp_{\bar{p}}t\exp_{\bar{p}}^{-1}q)\geq\Phi_{i}(\bar{p})\quad\forall~i.
	\end{equation*}
	\begin{equation*}
		\Rightarrow~~\frac{\Phi_{i}(\exp_{\bar{p}}t\exp_{\bar{p}}^{-1}q)-\Phi_{i}(\bar{p})}{t}\geq 0\quad\forall~q\in S~\mbox{and}~t\in [0,1].
	\end{equation*}
	Now, taking $liminf$ as $ t\rightarrow0^{+} $ and using (\ref{e12}) we have
	\begin{equation*}
		h(\bar{p},\exp_{\bar{p}}^{-1}q)\notin-\mathbb{R}_{+}^{m}\setminus\{0\}.
	\end{equation*}
	Hence, $ \bar{p} $ is a solution of NVVIP.
\end{proof}

\noindent
\begin{theorem}
	Let $ \Phi:S\longrightarrow\mathbb{R}^{m} $ be a vector valued function s.t.
	\begin{equation}\label{e13}
		h(p,\exp_{p}^{-1}q)-\Phi(q)+\Phi(p)\in int\mathbb{R}_{+}^{m}\quad\forall p,q \in S,~p\neq q.
	\end{equation}
	Then, every weakly efficient solution of NVOP is the solution of NVVIP. 
\end{theorem}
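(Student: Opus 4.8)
The plan is to argue by contradiction. Suppose $\bar p \in S$ is a weakly efficient solution of NVOP but is \emph{not} a solution of NVVIP. Then there must exist a point $q \in S$, necessarily with $q \neq \bar p$, such that
\[
h(\bar p, \exp_{\bar p}^{-1} q) \in -\mathbb{R}_{+}^{m}\setminus\{0\}.
\]
First I would invoke the standing hypothesis $(\ref{e13})$ at the admissible pair $(\bar p, q)$, namely $h(\bar p, \exp_{\bar p}^{-1} q) - \Phi(q) + \Phi(\bar p) \in int\mathbb{R}_{+}^{m}$, and rewrite it as $\Phi(q) - \Phi(\bar p) = h(\bar p, \exp_{\bar p}^{-1} q) - v$ for some $v \in int\mathbb{R}_{+}^{m}$.

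The crux is then an elementary cone computation: every component of $h(\bar p, \exp_{\bar p}^{-1} q)$ is $\leq 0$ while every component of $v$ is $>0$, so every component of $\Phi(q) - \Phi(\bar p)$ is strictly negative; that is, $\Phi(q) - \Phi(\bar p) \in -int\mathbb{R}_{+}^{m}$. This exhibits a feasible point strictly dominating $\bar p$ in every objective, which contradicts the weak efficiency of $\bar p$. Hence no such $q$ exists and $\bar p$ solves NVVIP.

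An alternative, even shorter route — the one I would actually write out if one reads Definition $(\ref{e11})$ literally, i.e.\ $\Phi(q) - \Phi(\bar p) \in int\mathbb{R}_{+}^{m}$ for all $q \in S$ with $q \neq \bar p$ — avoids contradiction altogether: adding that inclusion to $(\ref{e13})$ and using $int\mathbb{R}_{+}^{m} + int\mathbb{R}_{+}^{m} = int\mathbb{R}_{+}^{m}$ yields $h(\bar p, \exp_{\bar p}^{-1} q) \in int\mathbb{R}_{+}^{m}$ for every $q \neq \bar p$; since $int\mathbb{R}_{+}^{m}$ is disjoint from $-\mathbb{R}_{+}^{m}\setminus\{0\}$, the defining condition of NVVIP holds at $\bar p$.

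I do not anticipate a genuine obstacle here; the statement is a direct translation of the cone inequality in $(\ref{e13})$. The only points needing a little care are (i) ensuring the case $q = \bar p$ is excluded — which is automatic, since the NVVIP condition only concerns $-\mathbb{R}_{+}^{m}\setminus\{0\}$ and since $(\ref{e13})$ is assumed only for $p \neq q$ — and (ii) the set-arithmetic inclusion $-\mathbb{R}_{+}^{m}\setminus\{0\} - int\mathbb{R}_{+}^{m} \subseteq -int\mathbb{R}_{+}^{m}$, which is checked componentwise.
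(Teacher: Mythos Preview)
Your contradiction argument is correct and coincides with the paper's own proof: assume $\bar p$ is weakly efficient but not a solution of NVVIP, produce a point $q$ with $h(\bar p,\exp_{\bar p}^{-1}q)$ in the negative cone, combine with~(\ref{e13}) to force $\Phi(q)-\Phi(\bar p)\in -\,int\,\mathbb{R}_+^m$, and contradict weak efficiency. If anything, you are slightly more careful than the paper: you correctly negate the NVVIP condition as $h(\bar p,\exp_{\bar p}^{-1}q)\in -\mathbb{R}_+^m\setminus\{0\}$ (the paper's proof writes $-\,int\,\mathbb{R}_+^m$ here, which is stronger than what the negation actually gives), and you explicitly record that $q\neq\bar p$ so that~(\ref{e13}) applies and the componentwise inclusion $-\mathbb{R}_+^m\setminus\{0\}-int\,\mathbb{R}_+^m\subseteq -\,int\,\mathbb{R}_+^m$ does the work.
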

\begin{proof}
	On contrary, suppose that $ \bar{p}\in S $ is a weakly efficient solution of the NVOP but is not the solution of NVVIP. Then, $\exists$ $ q\in S $ s.t.
	\begin{equation}\label{e14}
		h(\bar{p},\exp_{\bar{p}}^{-1}q)\in -int\mathbb{R}_{+}^{m}.
	\end{equation}
	From (\ref{e13}) and (\ref{e14})
	\begin{equation*}
		\Phi(q)-\Phi(\bar{p})\in -int\mathbb{R}_{+}^{m}
	\end{equation*}
	\begin{equation*}
		\Rightarrow \Phi_{i}(q)\textless\Phi_{i}(\bar{p})\quad\forall~i\in I.
	\end{equation*}
	Which is a contradiction to our assumption. Hence $ \bar{p} $ is the weakly efficient solution of NVOP.
\end{proof}
\begin{theorem}
	Let $ S(\neq\phi) \subseteq \mathbb{H} $ be a $ GC $ set and $\Phi:S\longrightarrow \mathbb{R}^{m} $ be a geodesic h-convex function. If $ \bar{p}\in S $ is the weakly efficient solution of NVOP then it is also solution of MNVVIP. 
\end{theorem}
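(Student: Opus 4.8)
The plan is to argue by contradiction, using geodesic $h$-convexity to convert the (weak) efficiency of $\bar{p}$ directly into the inequality that defines MNVVIP. First I would assume that $\bar{p}\in S$ is a weakly efficient solution of the NVOP but fails to solve the MNVVIP. Then there exists $q\in S$ — necessarily with $q\neq\bar{p}$ — such that $h(q,\exp_{q}^{-1}\bar{p})\in\mathbb{R}_{+}^{m}\setminus\{0\}$; in particular $h_{i}(q,\exp_{q}^{-1}\bar{p})\geq 0$ for every $i=1,\dots,m$.

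Next I would apply the geodesic $h$-convexity of $\Phi$ with the base point $q$ and the second point $\bar{p}$ (admissible since $q\neq\bar{p}$), which yields
\begin{equation*}
	h(q,\exp_{q}^{-1}\bar{p})\leq\Phi(\bar{p})-\Phi(q),
\end{equation*}
i.e. $h_{i}(q,\exp_{q}^{-1}\bar{p})\leq\Phi_{i}(\bar{p})-\Phi_{i}(q)$ for each $i$. On the other hand, weak efficiency of $\bar{p}$ says $\Phi(q)-\Phi(\bar{p})\in int\mathbb{R}_{+}^{m}$, hence $\Phi_{i}(\bar{p})-\Phi_{i}(q)<0$ for every $i$. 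Combining the two estimates gives $h_{i}(q,\exp_{q}^{-1}\bar{p})<0$ for all $i$, which contradicts $h_{i}(q,\exp_{q}^{-1}\bar{p})\geq 0$. Therefore no such $q$ can exist, so $h(q,\exp_{q}^{-1}\bar{p})\notin\mathbb{R}_{+}^{m}\setminus\{0\}$ for every $q\in S$, i.e. $\bar{p}$ is a solution of the MNVVIP.

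The whole argument is short and is essentially bookkeeping with the orderings $\leq$, $\notin\mathbb{R}_{+}^{m}\setminus\{0\}$ and $\in int\mathbb{R}_{+}^{m}$, so I do not anticipate a genuine obstacle. The one point that requires care is reading geodesic $h$-convexity correctly: the first slot of $h$ must be the evaluation point, so that it is $h(q,\cdot)$ — and not $h(\bar{p},\cdot)$ — that enters the estimate, which is exactly what the MNVVIP formulation $h(q,\exp_{q}^{-1}\bar{p})$ calls for. Apart from this, no continuity, monotonicity, or positive homogeneity of $h$ is needed; geodesic convexity of $S$ is used only to keep the comparisons meaningful, and the degenerate case $q=\bar{p}$ is vacuous in the MNVVIP requirement and can be set aside at the outset. (Here I am using the notion of weak efficiency exactly as defined above, namely $\Phi(q)-\Phi(\bar{p})\in int\mathbb{R}_{+}^{m}$ for all $q\in S$; this is what makes the estimate $h_{i}(q,\exp_{q}^{-1}\bar{p})<0$ available for \emph{every} index $i$.)
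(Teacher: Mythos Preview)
Your argument is correct and follows the same route as the paper: combine the geodesic $h$-convexity inequality $h(q,\exp_{q}^{-1}\bar{p})\leq\Phi(\bar{p})-\Phi(q)$ with the optimality of $\bar{p}$ to rule out $h(q,\exp_{q}^{-1}\bar{p})\in\mathbb{R}_{+}^{m}\setminus\{0\}$. The paper argues directly (and in fact invokes efficiency rather than the weak efficiency stated in the theorem), while you frame it as a contradiction using the paper's weak-efficiency definition verbatim; the underlying idea is identical.
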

\begin{proof}
	Let $ \bar{p}\in S $ be an efficient solution of NVOP, then
	\begin{equation}\label{e15}
		\Phi(q)-\Phi(\bar{p})=(\Phi_{1}(q)-\Phi_{1}(\bar{p}),...,\Phi_{m}(q)-\Phi_{m}(\bar{p}))\notin -\mathbb{R}_{+}^{m}\setminus\{0\}.
	\end{equation} 
	Since $ \Phi $ is geodesic h-convex, we have
	\begin{equation}\label{e16}
		h(q,\exp_{q}^{-1}\bar{p})+\Phi(q)-\Phi(\bar{p})\in -\mathbb{R}_{+}^{m}\setminus\{0\}.
	\end{equation}
	From equation (\ref{e15}) and (\ref{e16}), we get
	\begin{equation*}
		h(q,\exp_{q}^{-1}\bar{p})\notin\mathbb{R}_{+}^{m}\setminus\{0\}.
	\end{equation*}
	This implies that $ \bar{p} $ is the solution of MNVVIP.
\end{proof}
\begin{remark}
	We add the following condition  in order to relax the geodesic h-convexity of $\Phi$ by geodesic h-pseudoconvexity.
\end{remark}
\begin{theorem}\label{th3.13}
	Let $ S(\neq\phi) \subseteq \mathbb{H} $ be a $ GC $ set and a function $ \Phi:S\longrightarrow \mathbb{R}^{m} $ be a geodesic h-pseudoconvex. Let $ h:S\times T\mathbb{H}\longrightarrow (\mathbb{R}^{*})^{m} $ be a bifunction s.t.\\
	the equation (\ref{e7}) holds and that
	\begin{equation*}
		h(p,v)\geq -h(p,-v),~~h(p,\alpha v)=\alpha h(p,v)\quad\forall~~v\in T_{p}\mathbb{H}~\mbox{and}~p\in S.
	\end{equation*}
	Then, every efficient solution of NVOP is the solution of MNVVIP. 
\end{theorem}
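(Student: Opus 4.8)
The plan is to reduce to the geodesic‑quasiconvex situation and then run a Dini–directional‑derivative argument along a minimal geodesic. First I would note that applying geodesic $h$‑pseudoconvexity directly to the pair $(\bar p,q)$ is useless here, because efficiency of $\bar p$ already precludes $\Phi(q)<\Phi(\bar p)$; so something more is needed, and this is exactly what the two extra hypotheses provide. Indeed the assumed condition (\ref{e7}) together with the positive‑homogeneity/sign condition (\ref{e8}) are precisely the hypotheses of Corollary \ref{cor3.7}; hence $\Phi$ is geodesic quasiconvex, and by Theorem \ref{th3.6} also geodesic $h$‑quasiconvex. This is the content of the preceding Remark: relaxing geodesic $h$‑convexity to geodesic $h$‑pseudoconvexity costs exactly (\ref{e7}) and (\ref{e8}), which are just strong enough to recover geodesic quasiconvexity.

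Next I would argue by contradiction. Suppose $\bar p\in S$ is an efficient solution of NVOP that is \emph{not} a solution of MNVVIP. Then there is $q\in S$, $q\neq\bar p$, with $h(q,\exp_q^{-1}\bar p)\in\mathbb{R}_+^m\setminus\{0\}$, so $h_i(q,\exp_q^{-1}\bar p)\ge 0$ for every $i$. Put $w_t=\exp_q(t\exp_q^{-1}\bar p)$ for $t\in(0,1)$; since $S$ is geodesic convex we have $w_t\in S$, and $w_t$ runs along the minimal geodesic joining $q$ to $\bar p$, the tangent vectors at $w_t$ being related to $\exp_q^{-1}\bar p$ through (\ref{e3})--(\ref{e5}). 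Efficiency of $\bar p$ gives $\Phi(q)-\Phi(\bar p)\notin-\mathbb{R}_+^m\setminus\{0\}$, so it is impossible that $\Phi_i(q)<\Phi_i(\bar p)$ for every $i$; pick an index $j$ with $\Phi_j(\bar p)\le\Phi_j(q)$. For this $j$, geodesic quasiconvexity of $\Phi$ yields $\Phi_j(w_t)\le\max\{\Phi_j(q),\Phi_j(\bar p)\}=\Phi_j(q)$, so $(\Phi_j(w_t)-\Phi_j(q))/t\le 0$ for all small $t>0$; passing to $\limsup_{t\to 0^+}$ (Definition \ref{d2.4}) gives $\Phi_j^{D}(q,\exp_q^{-1}\bar p)\le 0$, and then (\ref{e7}) forces $h_j(q,\exp_q^{-1}\bar p)\le 0$, hence $h_j(q,\exp_q^{-1}\bar p)=0$.

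To convert this into a genuine contradiction with $h(q,\exp_q^{-1}\bar p)\in\mathbb{R}_+^m\setminus\{0\}$ I would split into two cases. If $\Phi(q)=\Phi(\bar p)$, then $\Phi_i(w_t)\le\max\{\Phi_i(q),\Phi_i(\bar p)\}=\Phi_i(\bar p)$ for all $i$, while efficiency of $\bar p$ forbids $\Phi(w_t)-\Phi(\bar p)\in-\mathbb{R}_+^m\setminus\{0\}$; together these force $\Phi(w_t)=\Phi(\bar p)=\Phi(q)$ for every $t$, whence $\Phi_i^{D}(q,\exp_q^{-1}\bar p)=0$ and, by (\ref{e7}), $h_i(q,\exp_q^{-1}\bar p)\le 0$ for all $i$ — contradicting membership in $\mathbb{R}_+^m\setminus\{0\}$. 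The remaining case $\Phi(q)\neq\Phi(\bar p)$ is where the real work lies: efficiency then forces some coordinate of $\Phi(q)-\Phi(\bar p)$ to be strictly positive, and one must propagate that strictness through the geodesic‑quasiconvexity/Dini‑derivative step to force a strictly negative coordinate of $h(q,\exp_q^{-1}\bar p)$, again a contradiction. I expect this to be the main obstacle: geodesic quasiconvexity alone only gives $\Phi_j(w_t)\le\Phi_j(q)$ (not a strict decrease), so extracting the strict negativity — together with the general bookkeeping of converting ``$\notin-\mathbb{R}_+^m\setminus\{0\}$'' into per‑coordinate statements strong enough to land the final vector outside $\mathbb{R}_+^m\setminus\{0\}$ rather than merely producing one nonpositive coordinate — is the part that must be handled with care; here the positive homogeneity of $h$ from (\ref{e8}) and the transport identities (\ref{e3})--(\ref{e5}) enter to keep the directional‑derivative computation along $w_t$ consistent.
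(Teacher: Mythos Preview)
Your first paragraph is exactly right and matches the paper: Corollary~\ref{cor3.7} yields geodesic $h$-quasiconvexity of $\Phi$. But from that point the paper's proof is a single line, whereas you detour into a coordinate-wise contradiction argument that you yourself leave unfinished.

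The paper simply writes: efficiency of $\bar p$ gives $\Phi(q)-\Phi(\bar p)\notin-\mathbb{R}_+^m\setminus\{0\}$ for every $q\in S$, and then, invoking geodesic $h$-quasiconvexity (Definition~\ref{d3.5}(3) with the roles of $p$ and $q$ interchanged), concludes $h(q,\exp_q^{-1}\bar p)\notin\mathbb{R}_+^m\setminus\{0\}$, which is exactly the MNVVIP condition. No case split, no Dini derivatives, no transport identities are used after Corollary~\ref{cor3.7}.

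Your route instead drops back to componentwise geodesic quasiconvexity and Dini upper derivatives and tries to rebuild the conclusion of $h$-quasiconvexity by hand. The case $\Phi(q)\neq\Phi(\bar p)$ you explicitly leave open, and for good reason: geodesic quasiconvexity gives only $\Phi_j(w_t)\le\Phi_j(q)$, never a strict inequality, so the $\limsup$ step can at best yield $h_j(q,\exp_q^{-1}\bar p)\le 0$ on the indices $j$ with $\Phi_j(\bar p)\le\Phi_j(q)$; for the indices with $\Phi_j(\bar p)>\Phi_j(q)$ you obtain nothing at all. Neither the sign condition in (\ref{e8}) nor the transport relations (\ref{e3})--(\ref{e5}) close this gap --- they act on directions at a single point, not on values of $\Phi$ along the geodesic. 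Since you already have geodesic $h$-quasiconvexity from your first paragraph, the remedy is simply to use it directly, as the paper does, rather than unpacking it into coordinates.
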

\begin{proof}
	Let $ \bar{p}\in S $  is an efficient solution of NVOP,
	therefore
	\begin{equation*}
		\Phi(q)-\Phi(\bar{p})=(\Phi_{1}(q)-\Phi_{1}(\bar{p}),...,\Phi_{m}(q)-\Phi_{m}(\bar{p}))\notin -\mathbb{R}_{+}^{m}\setminus\{0\}\quad \forall~q\in S.
	\end{equation*}
	By corollary \ref{cor3.7}, $ \Phi $ is geodesic h-quasiconvex and we get
	\begin{equation*}
		h(q,\exp_{q}^{-1}\bar{p})\notin \mathbb{R}_{+}^{m}\setminus \{0\}
	\end{equation*}
	Therefore,
	$ \bar{p} $ is the solution of MNVVIP.
\end{proof}
%%%%%%%%%%%%%%%%%%%%%%%%%%%%%%%%%%%%%%%%%%%%%%%%%%%%%%%%%%%%%%%%%%%%%
\section{Conclusion}\label{sec13}
An efficient tool to study VOP is the theory of VVI, which was started by Giannessi \cite{Giannessi}. In this paper, we have extended the concept of NVVIP and MNVVIP on Hadamard manifolds by using bifunctions. The idea of geodesic convexity and geodesic quasiconvexity for the vector valued function have been investigated on the Hadamard manifolds. Several relations among different kinds of solutions of NVOP, NVVIP and MNVVIP have also been discussed. In future, the results of the paper can be explored to Riemannian manifolds. Equilibrium problems and Hierarchical problems can also be considered in the same manner as we have considered NVVIP, MNVVIP and NVOP for the vector valued functions.

\vskip 6mm
\noindent{\bf Acknowledgments}

\noindent   The author is grateful to the reviewers for useful suggestions which improved the contents of this paper.
The second author was supported by the  National Natural Science Foundation  under Grant No.
1244145e2.

\end{document}